\documentclass[12pt,reqno]{amsart}

\usepackage{amssymb,amsmath,amsthm,mathtools,wasysym,calc,verbatim,enumitem,tikz,pgfplots,hyperref,url,mathrsfs,cite,fullpage,bbm,comment}
\usepackage{comment}
\mathtoolsset{showonlyrefs}
\pgfplotsset{compat=1.18}
\addtolength{\footskip}{\baselineskip/2}

\newtheorem{theorem}{Theorem}

\newtheorem{lemma}[theorem]{Lemma}
\newtheorem{claim}[theorem]{Claim}

\newtheorem{observation}[theorem]{Observation}

\newtheorem*{claim*}{Claim}

\theoremstyle{remark}
\newtheorem*{remark*}{Remark}

\numberwithin{theorem}{section}

\renewcommand{\phi}{\varphi}

\renewcommand{\leq}{\le}
\renewcommand{\geq}{\ge}

\newcommand{\eps}{\varepsilon}

\newcommand{\cA}{\mathcal A}

\newcommand{\cC}{\mathcal C}

\newcommand{\cO}{\mathcal O}

\newcommand{\cI}{\mathcal I}

\def\1{\mathbbm{1}}

\renewcommand{\le}{\leqslant}
\renewcommand{\ge}{\geqslant}
\renewcommand{\P}{\mathbb{P}}

\newcommand{\PP}{\mathbb P}

\newcommand{\Span}{\mathrm{span}}

\title{A polynomial improvement for the odd cycle--complete Ramsey numbers} 
\author{Marcelo Campos, Matthew Jenssen, Marcus Michelen, Florian Pfender, \\ Julian Sahasrabudhe}

\address{Instituto de Matem\'{a}tica Pura e Aplicada (IMPA).}
\email{marcelo.campos@impa.br}
\address{King's College London, Department of Mathematics.}
\email{matthew.jenssen@kcl.ac.uk}
\address{Northwestern University. Department of Mathematics.}
\email{michelen.math@gmail.com}
\address{University of Colorado Denver. Department of Mathematical and Statistical Sciences.}
\email{florian.pfender@ucdenver.edu}
\address{University of Cambridge. Department of Pure Mathematics and Mathematical Statistics.} 
\email{jdrs2@cam.ac.uk}

\begin{document}

\thanks{Marcelo Campos is supported by Serrapilheira grant R-2412-51283. Matthew Jenssen is supported by a UK Research and Innovation Future Leaders Fellowship MR/W007320/2. Marcus Michelen is supported in part by NSF grants DMS-2336788 and DMS-2246624. Florian Pfender is supported in part by NSF FRG grant DMS-2152498. Julian Sahasrabudhe is supported by European Research
Council (ERC) Starting Grant “High Dimensional Probability and Combinatorics”, grant
No. 101165900}

\begin{abstract}
We give a polynomial improvement to the cycle-complete Ramsey numbers 
\[ r(C_{\ell},K_k) \geq k^{1+\frac{1}{\ell-2} + \eps_{\ell} + o(1)}, \]
for all fixed odd $\ell > 7$ with $k \rightarrow \infty$, for some $\eps_{\ell} > 0$.
\end{abstract}

\maketitle

\vspace{-2em}

\section{Introduction}

The \emph{cycle-complete} Ramsey number $r(C_{\ell},K_k)$ is the smallest $n$ so that every graph on $n$ vertices has either a cycle of length $\ell$ or an independent set of size $k$. Here our interest lies in the case when $\ell$ is fixed and odd and $k$ tends to infinity. The study of these numbers goes back to the foundational work of Erd\H{o}s \cite{erdos1959graph} who proved 
\begin{equation}\label{eq:erdos-bounds} c_{\ell} k^{1+\frac{1}{2\ell}} \leq r(C_{\ell},K_k) \leq c_{\ell}' k^{1+\frac{2}{\ell-1}} ,\end{equation}
where the upper bound holds for all odd $\ell > 1$ (he proved a similar result when $\ell$ is even) and the lower bound was proved using an early application of the probabilistic method. The lower bound was then improved by Spencer \cite{spencer1977asymptotic} in 1977 who used the Lov\'asz local lemma  to show
\begin{equation}\label{eq:spencer} r(C_{\ell},K_k) \geq k^{1+\frac{1}{\ell-2} +o(1) }.\end{equation}
 While this bound has been improved by logarithmic factors by Bohman and Keevash \cite{bohman2010early} 
and by Verstra\"{e}te and Mubayi \cite{mubayi2024}, the exponent in \eqref{eq:spencer} has stood as the best known bound on these numbers, apart from the special cases $\ell \in \{5,7\}$, which we discuss below. In this paper we give the first polynomial improvement to this bound for all odd $\ell>7$. 

\begin{theorem}\label{thm:main1} For all odd $\ell > 7$, we have that 
\[ r(C_{\ell},K_k) \geq k^{1+\frac{1}{\ell-2} + \eps_{\ell} + o(1)},\]
where $\eps_{\ell} = \Theta(\ell^{-2})$.
\end{theorem}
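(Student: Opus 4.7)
The target is to construct a $C_\ell$-free graph $G$ on $n = k^{1+1/(\ell-2)+\eps_\ell - o(1)}$ vertices with $\alpha(G) < k$. My plan is to refine Spencer's random-deletion method by sampling a slightly denser random graph and then removing edges in a correlated fashion so as to destroy many copies of $C_\ell$ at once. Set $p = n^{-\beta}$ with $\beta = (\ell-2)/(\ell-1) - \gamma$ for a small parameter $\gamma = \gamma(\ell) > 0$ to be optimized, and sample $G_0 \sim G(n,p)$. At this density $G_0$ has $\Theta(n^2 p)$ edges but $\Theta(n^\ell p^\ell) = n^{\gamma(\ell-1)} \cdot \Theta(n^2 p)$ copies of $C_\ell$, so Spencer's one-edge-per-copy scheme is far too wasteful.

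The crux is to show that the $C_\ell$-copies in $G_0$ concentrate on a small \emph{kernel} $H \subseteq G_0$ with $|E(H)| = o(n^2 p)$ such that removing $E(H)$ destroys a $(1-o(1))$-fraction of all $C_\ell$-copies. Concretely, I would study the random variable $X_e := \#\{C_\ell \subseteq G_0 : e \in C_\ell\}$, which has mean $\mu = \Theta(n^{\ell-2} p^{\ell-1})$, and combine a second-moment / Janson-type upper-tail estimate with a covering argument to find a threshold $T \gg \mu$ such that $H := \{e : X_e \ge T\}$ is small but accounts for most of the $C_\ell$-mass. A follow-up Spencer-style deletion then removes the sparse residue of copies, leaving a $C_\ell$-free graph $G$ with $(1-o(1))\, n^2 p/2$ edges.

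For the independence number, since we removed only $o(n^2 p)$ edges from $G_0$, a routine union bound gives $\alpha(G) \le (2+o(1))\, p^{-1} \log(np)$ with high probability. Imposing $\alpha(G) < k$ and optimizing $\gamma$ yields $n \ge k^{(\ell-1)/(\ell-2) + \Theta(\gamma)}$, so that $\eps_\ell = \Theta(\gamma)$.

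The main obstacle is the kernel step: one must make the covering argument quantitatively tight enough to admit $\gamma$ as large as $\Theta(\ell^{-2})$. Upper-tail behaviour of subgraph counts in $G(n,p)$ is notoriously delicate, so I would expect to proceed iteratively, peeling off heavy edges in several rounds at progressively smaller thresholds and tracking both the remaining $C_\ell$-count and the vertex/edge co-degree statistics. The $\Theta(\ell^{-2})$ scaling plausibly reflects a second-order effect: the excess density parameter $\gamma \sim 1/\ell$ pushes the number of copies-per-edge up by $n^{\gamma(\ell-1)}$, but the fraction of copies one can hope to absorb into a kernel of size $o(n^2 p)$ drops by a comparable factor, and the sustainable $\gamma$ is the fixed point of that trade-off. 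A secondary concern is uniformity in $\ell$: the constants in the concentration estimates must not degrade with $\ell$, or the polynomial improvement collapses into the $o(1)$ error.
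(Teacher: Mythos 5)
Your plan diverges fundamentally from the paper and, more importantly, the key step does not work. The ``kernel'' you propose does not exist in $G(n,p)$. If $p = p_c\, n^{\gamma}$, then indeed each edge lies in roughly $\mu = \Theta(n^{\ell-2}p^{\ell-1}) = n^{\gamma(\ell-1)+o(1)}$ copies of $C_\ell$, but the random variable $X_e = \#\{C_\ell \ni e\}$ is \emph{concentrated} around $\mu$ (the correlations among $C_\ell$'s through a fixed $e$ are weak in this regime, so second--moment and Janson-type bounds show $X_e = (1+o(1))\mu$ for all but a negligible fraction of edges). Consequently the total $C_\ell$-mass $\sum_e X_e = \ell \cdot \#\{C_\ell \subseteq G_0\}$ is dominated by edges with $X_e \approx \mu$; a set $H = \{e : X_e \ge T\}$ with $T \gg \mu$ is both small \emph{and} carries a vanishing fraction of the mass. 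There is no small heavy-edge kernel to peel off, and iterating at progressively smaller thresholds just rediscovers Spencer's bound. Put differently, once you are above the deletion threshold, $\#C_\ell / \#\text{edges} \to \infty$ and the copies are essentially uniformly spread, so ``remove one edge per copy'' cannot be improved by a covering argument inside the vanilla model.

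The paper sidesteps this by \emph{manufacturing} the concentration rather than hoping for it. The ambient graph is not $G(n,p)$ but a union of two independently placed $r$-blow-ups of sparser random graphs $G(n/r,p)$. In that model, if a pair $xy$ is closed by a non-alternating $(\ell-1)$-path, it is necessarily closed by at least $r$ such paths (the blow-up parts give ``free'' parallel routes), so a bound on the total number of $J$-to-$J$ walks of length $\ell-1$ --- obtained via a spectral decomposition of the adjacency matrix of the blow-up union --- immediately limits the number of \emph{distinct} pairs in danger. That is the analogue of your missing kernel: the multiplicity $r$ is built in by construction, and the edge-deletion rule (via orderings and apex vertices) is engineered so that at most one red edge per danger-pair is ever lost when testing a candidate independent set. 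None of this has an analogue in $G(n,p)$, which is why the kernel step of your proposal cannot be salvaged without changing the underlying random model.

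A smaller but real issue: even granting a kernel, your independence-number bound $\alpha(G) \le (2+o(1))p^{-1}\log(np)$ requires that the deletions in the edge-removal step do not conspire against a fixed candidate $I$. In the paper this is handled by a careful two-round exposure (reveal the blue graph and the uncoupled red edges first, then argue via the deterministic Claim that an independent $J$ forces many still-random ``open'' red slots to be empty). Your proposal treats the deleted edges as a deterministic sparse subgraph and unions over $I$; this ignores the adaptivity of the deletion rule to the random graph and would need its own conditioning argument.
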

In fact we can take $\eps_{\ell} = ((\ell-2)(2\ell-5))^{-1}$ in the statement of our main theorem.

What is perhaps most interesting about this result is that the bound \eqref{eq:spencer} represents a natural barrier for both this and many other problems in combinatorics. Indeed, a simple way to construct a $C_\ell$-free graph with small independence number is to  sample $G \sim G(n,p)$ and then delete an edge from each instance of $C_{\ell}$.  Here one needs to choose $p$ so that 
\[ p^{\ell}n^{\ell} \ll pn^2, \]
to ensure that only a small fraction of the edges of $G$ are removed. This is known as \emph{the deletion threshold} for the cycle $C_{\ell}$ and the bound at \eqref{eq:spencer} reflects the independence number of $G(n,p)$ precisely at this deletion threshold. In many cases in combinatorics we know little about how to obtain good examples that are significantly beyond this threshold.

In this paper we show that we can push well beyond this deletion threshold for the cycle complete Ramsey numbers, using a new idea of randomly superimposing blow-ups of random graphs. This idea is due to Hefty, Horn, King and Pfender \cite{hefty2025}, which was inspired by a previous idea of Campos, Jenssen, Michelen and Sahasrabudhe \cite{campos2025}.  In \cite{hefty2025}, this yields the lower bound on the extreme off diagonal Ramsey numbers $R(3,k)$
\[
R(3,k)\ge \bigg( \frac{1}{2} +o(1) \bigg) \frac{k^2}{\log k}
,\] which is within a factor of $2+o(1)$ of the best upper bound due to Shearer \cite{shearer1983note}, and is conjectured to be sharp \cite{campos2025}.

It seems reasonable to guess, for odd $\ell$, that Erd\H{o}s's bound \eqref{eq:erdos-bounds} gives the correct exponent, 
\[ r(C_{\ell},K_k) = k^{1+\frac{2}{\ell-1}+o(1)},\]
which has been tentatively conjectured and supported by some recent conditional results~\cite{conlon2024ramsey}. It appears that the construction in this paper cannot be directly used to obtain an improvement to the lower bound beyond a factor of $n^{\eps_{\ell} + o(1)}$ where $\eps_{\ell} = \Theta(\ell^{-2})$.

\subsection{A brief history of the problem}
Before we turn to give the short proof of our main theorem, we briefly elaborate on the history of the problem. While the upper and lower bounds stated above are the best known for general odd $\ell$, up to lower order factors, there have been several interesting advances on the lower order terms. The best known upper bound is due to Sudakov \cite{sudakov2002note} and, independently, Li and Zang \cite{li2003independence} who proved
\[ r(C_{\ell}, K_k) \leq c_{\ell} k^{1+\frac{2}{\ell-1}} (\log k)^{-\frac{2}{\ell-1}}\] for all odd $\ell >2$. The lower bounds were also improved by poly-logarithmic factors, first in the influential work of Bohman and Keevash~\cite{bohman2010early} on the $H$-free process and then, more recently, by Mubayi and Verstra\"{e}te \cite{mubayi2024} via a new method which established an important connection between lower bounds for the Ramsey numbers and spectral expanders.

There have also been two recent \emph{polynomial} improvements to the cycle-complete Ramsey numbers when $\ell \in \{ 5,7 \}$. Mubayi and Verstra\"{e}te \cite{mubayi2024} were the first to break the deletion threshold barrier for $C_5$ and $C_7$ by showing 
 \begin{equation*}
    r(C_5,K_k) \geq (1 + o(1))k^{11/8} \qquad \text{ and } \qquad r(C_7,K_k) \geq (1 + o(1))k^{11/9}\,,
\end{equation*}
using an ingenious construction based on incidence
graphs of generalized hexagons. These bounds were then further improved by Conlon, Mattheus, Mubayi and Verstra{\"e}te~\cite{conlon2024ramsey} who showed that
\begin{equation}\label{eq:CMMV}
    r(C_5,K_k) \geq k^{10/7 -o(1)} \qquad \text{ and } \qquad r(C_7,K_k) \geq k^{5/4-o(1)}\, ,
\end{equation}
by leveraging ideas from the recent breakthrough of Mattheus and Verstra\"ete on the Ramsey numbers $R(4,k)$\cite{mattheus2024asymptotics}.

In the case of even $\ell$, there are no known examples that beat the deletion threshold by a polynomial factor. Again \cite{bohman2010early} and \cite{mubayi2024} supply us with poly-logarithmic improvements to the lower bound. For the upper bound, Erd\H{o}s proved in the 1950s that  
 \[  r(C_\ell,K_k) \leq c_{\ell} n^{1 + \frac{2}{\ell-2}}\,, \]
which has only been improved by polylogarithmic factors by Caro, Li, Rousseau, and Zhang \cite{caro2000asymptotic}. For even $\ell$ it is much less clear if Erd\H{o}s's upper bound is sharp. Indeed, it is a beautiful conjecture of Erd\H{o}s that 
\[ r(C_4,K_k) \leq k^{2-\eps + o(1)},\]
for some $\eps>0$.

Finally we mention that when $k$ is fixed and $\ell \rightarrow \infty$ the Ramsey numbers $r(C_{\ell},K_k)$ have also received quite a bit of attention. As this regime enjoys a somewhat different behavior from what we see for fixed $\ell$, we defer to the paper of Keevash, Long and Skokan \cite{keevash2021cycle}.

\section{The Construction} 
Let $\ell \geq 5$ be an odd integer. Define $p_c$ to be the \emph{deletion threshold} for $C_{\ell}$ in $G(n,p)$: the value for which $p_c^{\ell}n^{\ell} = p_c n^2$. That is,
\[  p_c = n^{-1+\frac{1}{\ell-1}}.\]
We now define our parameters 
\begin{equation}\label{eq:p-def}  \eps = ((2\ell-3)(\ell-1))^{-1}, \qquad p = p_c n^{\eps}(\log n)^{-1} , \qquad \text{ and } \qquad r =  (pn)^{1/2}(\log n)^{-3/2} .
\end{equation}
We note that this $\eps$ is not the same as the $\eps_\ell$ that appears in the statement of Theorem~\ref{thm:main1}. We construct a $C_\ell$-free graph %\Matt{on $\Omega(n)$ vertices} 
on $(1-o(1))n$ vertices with density $\approx 2p$ with no independent sets of size $k$, where
\begin{equation}  k =8p^{-1}( \log n)^3 . \end{equation}
We will see that Theorem~\ref{thm:main1} then follows immediately. For future reference, we note that our parameters satisfy \footnote{We write $f(n)\ll g(n)$ to denote that $f(n)/g(n)\to 0$ as $n\to\infty$.}

\begin{equation}\label{eq:inequalities} p^{\ell-1}n^{\ell-2}/r \ll (\log n)^{-2},\qquad r(pn)^{\ell-2} \ll n\qquad \text{and} \qquad  r \leq 4\sqrt{n/k}. \end{equation}

Throughout this paper, we assume that 
$n$ is sufficiently large for all stated inequalities to hold.

We now describe the construction in the following three short subsections.

\subsection{Superimposing blowups of random graphs}
We let  
\begin{align}\label{eq:randomparts}
[n] = V_1 \cup \cdots \cup V_{n/r} = W_1 \cup \cdots \cup W_{n/r} 
\end{align}
be two independent random partitions of $[n]$ into sets of size $r$. We call these the \emph{red} and \emph{blue} partitions of $[n]$ respectively (not to be confused with the red and blue colours of a Ramsey colouring). We now define $G$ to be a random graph defined in the following steps. We first independently sample two graphs 
\[ G^{\circ}_{R},\, G^{\circ}_{B} \sim G(n/r,p) . \]  We then define 
 $G'_R$ to be the $r$-blow up of $G^{\circ}_{R}$, with parts $V_1,\ldots,V_{n/r}$ and define $G'_B$ to be the $r$-blow up of $G^{\circ}_{B}$, with parts $W_1,\ldots,W_{n/r}$. We then define the multigraph
 \[ G' = G_R' \cup G_B',\]
 keeping both edges whenever an edge is in both $G_R'$ and $G_B'$. Throughout, we will refer to $G'_R$ as the red graph and $G'_B$ as the blue graph.

 We now construct the final graph $G$ from $G'$ in two steps. In a first step we delete vertices to deal with certain ``degenerate'' cycles in $G'$. Then in a second step, we delete edges to deal with the majority of the cycles.

\subsection{Vertex deletion step}\label{sec:vertex-deletion}
For this we now introduce a bit of notation and terminology. For $S \subset [n]$, define the ``projections''
\begin{equation}\label{eq:partition-pseuo-random} 
\pi_R(S) = \big\{ i : S \cap V_i \not= \emptyset \big\} \qquad \text{ and } \qquad \pi_B(S) = \big\{ i : S \cap W_i \not= \emptyset \big\} .
\end{equation} We say that two edges $e,f$ are \emph{parallel} if $\pi_R(e) = \pi_R(f)$ or $\pi_B(e) = \pi_B(f)$. Say that $H \subset G'$ is \emph{degenerate} if it contains parallel edges. In our vertex deletion step we will remove all degenerate $\ell$-cycles. We will do this by removing a slightly more general type of subgraph which we now describe. 

For $x_1,\ldots,x_t\in [n]$, we say that $B=x_1\ldots x_t$ is a \emph{broken path} of order $t$, if $B$ is a disjoint union of paths in $G'$; 
\[ P_1=x_{1}\ldots x_{i_1}, ~P_2=x_{i_1+1}\ldots x_{i_2},\quad \ldots, \quad ~P_s=x_{i_{s-1}+1}\ldots x_{t},\] where $1\le s\le t$, and $\pi_R(x_{i_j})=\pi_R( x_{i_j+1})$ or $\pi_B(x_{i_j})=\pi_B( x_{i_j+1})$ for each $1\le j\le s-1$. Here paths may have length $0$. A \emph{broken cycle} is a broken path $x_1\ldots x_t$ with the additional edge $x_tx_1$.

We now define $D$, the vertices of $G'$ which we will remove, to be the union of the vertices of all non-degenerate %\Matt{basic} 
broken cycles $B$ in $G'$ of order $t\leq \ell-1$ which contain an odd number of edges. %$e(B)\le \ell-2$.

\subsection{Edge deletion step}\label{sec:edge-deletion} 
We now describe a set of edges $F$ which we will remove from $G'$ to form $G$. To prepare the ground for this, we define a \emph{red ordering} on $[n]^{(2)}$ which is arbitrary apart from the fact that all pairs $V_i \times V_j$ are ordered consecutively. Similarly, we define a \emph{blue ordering} on $[n]^{(2)}$ to be arbitrary apart from the pairs $W_i\times W_j$ are ordered consecutively.

Now for every non-degenerate $\ell$-cycle $C \subset G'$, we identify an edge $e_C \in G'$ for deletion. For such $C \subset G'$, the edges of this cycle are coloured red and blue by definition. If the cycle is monochromatic red, we define $e_C$ to be the largest $e \in C$ in the red ordering. Likewise if the cycle is monochromatic blue, we define it to be the largest edge in the blue ordering. If both colours appear on $C$, say red is the minority colour. In this case, we define $e_C$ to be the the largest red $e \in C$, in the red ordering. The blue case is defined symmetrically. 

We then define $F$, the edges marked for deletion, as  
\[ F = \{ e_C : C \subset G' \text{ an } \ell\text{-cycle} \}.\]

Finally, we define $G$ to be $G'$ with a) all of the edges $F$ removed; b) all of the vertices $D$ deleted; and c) any multiple edges removed so that the remaining graph is simple. This concludes the definition of $G$.

Our main technical result of this paper is the following.

\begin{theorem}\label{thm:main2} For $\ell \geq 5$ and odd, $G$ is a $C_{\ell}$-free graph with $|V(G)| = (1-o(1))n$ and 
\[ \alpha(G) \leq 8p_\ell^{-1} (\log n)^{3},\]
with high probability, where $p = p_{\ell}$ is defined at \eqref{eq:p-def}. 
\end{theorem}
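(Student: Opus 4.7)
The theorem has three assertions. The vertex count $|V(G)|\geq n/2$ is exactly Lemma~\ref{lem:vx-del-2}, since the edge deletion step removes no vertices. The $C_\ell$-free property follows from Lemma~\ref{lem:vx-del} and the construction: every $C_\ell$ in $\widehat G$ is simple, the edge deletion step explicitly removes one edge from each such $C_\ell$, and removing edges cannot create new cycles; hence $G$ is $C_\ell$-free. The substantive claim is $\alpha(G)\leq k:=8p^{-1}(\log n)^3$ w.h.p., and my plan is a union bound over all $S\in \binom{[n]}{k}$: since $\binom{n}{k}\leq n^k$, it suffices to show
\[
\PP\!\bigl[S\text{ is independent in }G\bigr] = o(n^{-k})
\]
for each fixed $S$.

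Fix $S$ and focus on the red subgraph. Let $f_R\colon[n]\to[n/r]$ record each vertex's index in the red partition and set $T_R=f_R(S)$. Since $kr/n=o(1)$, with overwhelming probability $|T_R|\geq (1-o(1))k$. Because $G_R^\circ[T_R]\sim G(|T_R|,p)$ is independent of the partition and each of its edges lifts to an edge of $G_R'[S]$, a Chernoff bound yields
\[
e(G_R'[S])\;\geq\; e\bigl(G_R^\circ[T_R]\bigr)\;\geq\; \tfrac{1}{4} k^2 p \;=\; 16\,p^{-1}(\log n)^6
\]
with probability at least $1-\exp(-\Omega(k^2 p))$. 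Since $k^2 p \gg k\log n$, this tail alone absorbs the $n^k$ factor from the union bound. It then suffices to show that, with comparably high probability, the edge deletion step removes at most $\tfrac{1}{8}k^2 p$ red edges from $G_R'[S]$.

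The count of removable red edges is the heart of the argument. A red edge $e\in G_R'$ can be removed only if some simple $C_\ell$ in $\widehat G$ has $e$ as its \emph{largest red edge} (in the red ordering) and is either monochromatic red or has a blue apex. Fixing $e=xy$ and extending to a simple $C_\ell$ in $G'$, the raw first-moment cost of an extension is at most $n^{\ell-2}(2p)^{\ell-1}$; by inequality~\eqref{eq:inequalities} this quantity is $O(r/\log^2 n)$. One then uses the designation rule to prune: the ``largest red edge'' condition restricts the pair-blocks $V_{i'}\times V_{j'}$ of all other red edges of the cycle to lie below the pair-block of $e$ in the red block order, and the apex/colour-parity condition (which is where the oddness of $\ell$ enters, guaranteeing that every $C_\ell$ has a same-colour apex and so the rule is well-defined) couples the red and blue orderings so as to strip a further polynomial factor of $r$ from the count. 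Combined with Chernoff-type concentration in the randomness of $G_R^\circ$ and $G_B^\circ$, this should yield $\PP[S \text{ independent}]\leq \exp(-\Omega(k^2 p))$, and the symmetric argument for the blue subgraph together with removals caused by mixed cycles closes the union bound.

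The main obstacle is the quantitative pruning in the previous step. Without genuinely using the apex/parity rule, the first-moment bound only saves the factor $r/\log^2 n$ per red edge of $G_R'[S]$, which is larger than $1$ because $r$ is polynomially large; so the improvement cannot be obtained by treating the ``largest red edge'' condition alone, but must combine the consecutive-block structure of the red and blue orderings with the apex rule to extract the missing factor of $r$. The technical core is therefore a lemma of the shape: \emph{for a typical red edge $e$ of $G_R'[S]$, the conditional expected number of simple $C_\ell$ extensions in $G'$ that designate $e$ under the deletion rule is $o(1/\log^2 n)$.} Once this is in hand, linearity of expectation together with a Chernoff-style concentration argument delivers the $\exp(-\Omega(k^2 p))$ tail needed to complete the union bound, and hence the theorem.
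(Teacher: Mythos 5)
Your high-level frame -- lower-bound the number of red edges landing inside $S$ via Chernoff, then upper-bound the number of those edges destroyed by the deletion step -- is genuinely different from the paper's argument, and the part that carries all the difficulty is left as a conjecture. You correctly observe that the raw first-moment count of simple $C_\ell$-extensions of a red edge $e$ is $\Theta(r/\log^2 n)$, which is polynomially large, and that to finish you would need the ordering/apex rule to cut this down by a full factor of $r$. But you do not prove this; you only say such a lemma ``should'' hold. I don't believe it holds in the form you state. The red ordering is arbitrary within each block $V_i\times V_j$, so for an edge $e$ that happens to sit in a late block of the red ordering, essentially every simple $C_\ell$ through $e$ with the right apex colour will designate $e$, giving $\Theta(r/\log^2 n)$ designating cycles, not $o(1/\log^2 n)$. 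Salvaging this via a ``typical $e$'' restriction would require both controlling which edges of $S^{(2)}$ are atypical and then proving concentration of the total deletion count; and that concentration step is itself nontrivial, since the designating $C_\ell$'s are driven by the blown-up graphs and are massively positively correlated (a single block $V_i\times V_j$ being present creates $r^2$ edges and whole families of cycles at once), so the ``Chernoff-style concentration'' you invoke is not available off the shelf.

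The paper avoids counting deletions entirely. Its Lemma~\ref{lem:ind-sets} picks a representative set $J\subset I$ with $|\pi_R(J)|=|J|=\delta k$, exposes $G_0=G_B'\cup\{f\in G_R': f$ not coupled to $J^{(2)}\}$, and defines $e\in J^{(2)}$ to be \emph{open} if adding $e$ to $G_0$ closes no $C_\ell$. Claim~\ref{cl:ind-set-imp} is then a purely deterministic consequence of the ordering/apex deletion rule: the \emph{smallest} open red edge that actually lands in $G_R'$ can never be the designated edge of any cycle, hence survives into $G_R$. So $J$ independent in $G_R$ forces $O\cap G_R'=\emptyset$, and since the open edges live in distinct blocks (because $|\pi_R(J)|=|J|$), they are conditionally independent given $G_0$ and the probability is $(1-p)^{|O|}$. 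The factor-of-$r$ saving you were hunting for is extracted elsewhere, in Lemma~\ref{lem:closed-edges}: a closed pair (one with a non-alternating $\ell-1$ walk) is automatically hit by $\geq r$ such walks because of the blow-up structure, so Lemma~\ref{lem:path-counting} divided by $r$ bounds $|J^{(2)}\cap C|=o(|J|^2)$, giving $|O|\geq |J|^2/4$. This conditioning argument is what makes the proof short and rigorous, and it is the missing ingredient in your proposal. The vertex-count and $C_\ell$-freeness observations at the start of your write-up match the paper and are fine.
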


Note that Theorem~\ref{thm:main2} implies Theorem~\ref{thm:main1}. Indeed if we set
\[
    k=8p^{-1} (\log n)^{3} = n^{1-\frac{1}{\ell-1}-\frac{1}{(\ell-1)(2\ell-3)}+o(1)},
\]
Theorem~\ref{thm:main2} says there exists a $C_{\ell}$-free graph on $(1-o(1))n$ vertices with no independent set of size $k$. Thus
\begin{align*}
    r(C_\ell,K_k)\ge (1-o(1))n=k^{1+\frac1{\ell-2}+\frac1{(\ell-2)(2\ell-5)}+o(1)}.
\end{align*}

\subsection{Heuristic discussion of construction}
 We seek to construct a graph that is $C_{\ell}$-free %, has reasonable density, 
 and is sufficiently dense and ``pseudo-random'' to meet all $k$-sets in an edge. Today, the standard method of achieving these ends is via the ``deletion method'', mentioned above. Here one first samples a random graph $G' \sim G(n,p)$ and then deletes an edge from each cycle of length $\ell$ to form the graph $G$. For this method to have any chance of working, one needs that the number of copies of $C_{\ell}$ in this random sample is significantly smaller than the number of edges; otherwise, too many edges will be deleted in the deletion step. In the context of $C_{\ell}$-free graphs this means 
 \[ p^{\ell}n^{\ell} \ll pn^2 .\] 

The key behind our construction is that it improves this trade-off in the deletion step, while remaining pseudo-random. In the standard deletion method proof, the deletion of \emph{one} edge results in the removal of \emph{one} copy of $C_{\ell}$. For us, the deletion of one edge will result in the removal of $r = n^{\eps}$ copies of $C_{\ell}$. Thus we shall need the density only to satisfy 
\[ p^{\ell}n^{\ell} \ll r \cdot pn^2, \]
which allows us to take the density a polynomial factor denser than in the standard deletion method.  

To see where this $1$ to $r$ ratio ``comes from'', we make the following simple observation, on which the whole paper hinges. Recall that $G'$ is our graph $G$ before the deletion step.

\begin{observation} Let $C \subset G'$ be an $\ell$-cycle. There exists $e \in C$ which is in at least $r - \ell$ $\ell$-cycles.
\end{observation}
\begin{proof} $C$ is two coloured by the red and blue graphs $G'_R,G_B'$. Since $\ell$ is odd there exists a vertex $x \in V(C)$ for which both incident edges are the same colour. Without loss, assume this colour is red. Now fix some $e \in C$ not incident with $x$. Then each of the sets 
\[ V(C) \setminus \{ x \} \cup \{ x'\}  \qquad \text{ where } \qquad \pi_R(x') = \pi_R(x) \hspace{1mm} \text{ and }x'\notin V(C)\] 
are cycles which include $e$. 
\end{proof}

Thus we see that the deletion of $e$, from the above observation, removes $r-O(1)$ cycles. 

In practice, we don't just care that the number of edges removed is small but also that the removed edges are ``well-spread'' throughout the graph. Indeed the main technical obstacle in this paper is to show that %when we delete such an edge 
the graph after the deletion step remains sufficiently pseudo-random so that the independence number of $G$ mimics that of the random graph of the same density. In our context, we prove
\[ \alpha(G) \leq p^{-1}(\log n)^{\Theta(1)}.\]
Formally, this observation is relevant in Lemma~\ref{lem:closed-edges}, when we are calculating the number of closed pairs in a $k$-set. 

\section{Removing degenerate cycles}
In this section we prove that $D$ intersects all degenerate copies of $C_{\ell}$ in $G'$. 

\begin{lemma}
\label{lem:vx-del}
$G'-D$ contains only non-degenerate copies of $C_\ell$.
\end{lemma}

We also need to show that $D$ is relatively small. 

\begin{lemma}\label{lem:vx-del-2}  With high probability, $|D|=o(n)$.
\end{lemma}

Before turning to the proofs of these lemmas, we record the following simple observation. 

\begin{observation}\label{obs:broken-cycle}
Let $B$ be a broken cycle with an odd number of edges. Then there exists a non-degenerate broken cycle $B'$ with an odd number of edges with $V(B') \subset V(B)$.
\end{observation}
\begin{proof}
We prove the claim by applying induction on the number of edges in $B$. If $B$ has one edge then $B$ is non-degenerate and we are done. So assume $B$ contains at least three edges. 
Now, if $B$ is degenerate then there exists a pair of parallel edges $e = x_1y_1$, $f = x_2y_2$. By symmetry we may assume that $\pi_R(x_1)=\pi_R(x_2)$ and $\pi_R(y_1)=\pi_R(y_2)$. 

Now, let $P$ denote the broken path from $x_1$ to $x_2$ in $B$ that does not contain $y_1$ and let $Q$ denote the broken path from $y_1$ to $y_2$ in $B$ that does not contain $x_1$. (Note that one of $P$, $Q$ may be trivial).

\usetikzlibrary{calc, positioning}

\tikzset{vtx/.style={circle, draw,fill=white, inner sep=1.5pt}}

\begin{figure}[h!]
\begin{center}
    \begin{tikzpicture}
    \draw
    (-0.1,0) node[vtx](x1){$x_1$} 
    (1.1,0) node[vtx](y1){$y_1$} 
    (-0.1,2) node[vtx](x2){$x_2$} 
    (1.1,2) node[vtx](y2){$y_2$} 
    (-1,1) node {$P$}
    (2,1) node {$Q$}
    (0.5,0.3) node {$e$}
    (0.5,1.6) node {$f$}
    ;
    \draw
    (x1) --(y1)
    (x2) -- (y2);
    \draw[line width=0.5mm, rounded corners = 8mm]
    (x1) -- (-1.5,0) -- (-1.5,2) -- (x2);
    \draw[line width=0.5mm, rounded corners = 8mm]
    (y1) -- (2.5,0) -- (2.5,2) -- (y2);
    \end{tikzpicture}
    \hspace{1.5cm}
    \begin{tikzpicture}
    \draw
    (-0.1,0) node[vtx](x1){$x_1$} 
    (1.1,0) node[vtx](y1){$y_1$} 
    (-0.1,2) node[vtx](y2){$y_2$} 
    (1.1,2) node[vtx](x2){$x_2$} 
    (-1,1) node {$P$}
    (2,1) node {$Q$}
    (0.5,0.3) node {$e$}
    (0.5,1.6) node {$f$}
    (1.2,1.95) node (x2-){}
    (-0.2,2.05) node (y2+){}
    ;
    \draw
    (x1) --(y1);
    %(x2) -- (y2);
    \draw[line width=0.5mm, rounded corners = 8mm]
    (x1) -- (-1.5,0) -- (-1.5,1.95) -- (x2-);
    \draw[line width=0.5mm, rounded corners = 8mm]
    (y1) -- (2.5,0) -- (2.5,2.05) -- (y2+);
    \draw
     (-0.1,2) node[vtx](y2){$y_2$} 
    (1.1,2) node[vtx](x2){$x_2$} 
    ;
    \end{tikzpicture}
\end{center}
\caption{Possible broken paths in the proof of Observation~\ref{obs:broken-cycle}}
\label{fig:PQ}
\end{figure}

There are now two cases (depicted in Figure~\ref{fig:PQ}). In the first case, $f$ is in neither of $P,Q$. In this case, $E(B) = E(P) \cup E(Q) \cup \{e\} \cup \{f\}$ is a partition and thus we see that both $|E(P)|,|E(Q)| < |E(B)|$. Moreover, one $E(P),E(Q)$ must be odd.

In the other case, $f$ is in \emph{both} of $P,Q$. Then $E(B)\setminus \{f\} = (E(P)\setminus \{f\}) \cup (E(Q)\setminus\{f\}) \cup \{e\}$ is a partition. From this we deduce, again, we see that $|E(P)|,|E(Q)| < |E(B)|$ and that one must be odd. 

In either case, without loss, assume that $P$ is our desired path; that is $|E(P)|<|E(B)|$ and odd. As $\pi_R(x_1)=\pi_R(x_2)$, $P$ is a broken cycle. We thus apply induction to $P$. 
\end{proof}

We now turn to prove Lemma~\ref{lem:vx-del}. Recall that an odd cycle is also a broken cycle with an odd number of edges.

\begin{proof}[Proof of Lemma~\ref{lem:vx-del}]
Assume $C \subset G'$ is a degenerate $C_{\ell}$. By Observation~\ref{obs:broken-cycle} there is a non-degenerate broken cycle $B$ with $V(B) \subset V(C)$ with an odd number of edges. By definition, we have $V(B)\subset D$, and thus $D\cap V(C)\ne \emptyset$. So $C \not\subset G-D$, as desired.
 \end{proof}

We now prove Lemma~\ref{lem:vx-del-2}. For this, we note that if $H \subset [n]^{(2)}$ is non-degenerate, then
\begin{equation}\label{eq:simp-prob} \PP( H \subset G') \leq (2p)^{e(H)}.\end{equation}

\begin{proof}[Proof of Lemma~\ref{lem:vx-del-2}]  
We show the expected number of non-degenerate broken cycles with an odd number of edges is at most $o(n)$. We then apply Markov's inequality to finish. We claim that the expected number of non-degenerate broken cycles in $G'$ of order $2\leq t\leq \ell-1 $ with $1\le a\le t-1$  edges is at most
\[
2^\ell (2r)^{t-a} n^{a} (2p)^a\, 
.\]
Indeed, if we write the broken cycle $x_1\ldots x_{t}$ with $x_tx_1$ being an edge, there are $\leq 2^{t-1}$ ways of specifying which pairs $x_ix_{i+1}$ ($i\neq t$) are edges of the broken cycle. There are then $n$ choices for $x_1$ and $\leq 2r$ choices for $x_2$ if $x_1x_2$ is not an edge and $\leq n$ choices for $x_2$ otherwise. It follows inductively that there are $\leq 2^{t-1} (2r)^{t-a} n^{a}$ choices for $x_1,\ldots, x_t$. Since the broken cycle is non-degenerate, the probability that the $a$  edges appear in $G'$ is at most $(2p)^a$ from \eqref{eq:simp-prob}. Recalling~\eqref{eq:inequalities}, we have
\[
r^{t-a}n^a p^a \leq r (np)^{\ell-2} =o(n), \] as desired.\end{proof}

\section{Basic Pseudo-random properties}
In this section we define a pseudo-randomness property $\cA$ for the graphs $G^{\circ}_R, G^{\circ}_B, G'$ and show that $\cA$ holds with high probability. The first property we include into $\cA$ is basic control on the degrees. For all $x \in V(G_R^{\circ})$ we have 
\begin{equation}\label{eq:degrees} d_{G^{\circ}_R}(x) = (1+O(n^{-c}))p n/r, \end{equation}
where $c=\tfrac{1}{4(\ell-1)}$ and likewise for $G_B^{\circ}$. The second property we include into $\cA$ is the following spectral pseudo-randomness property for the pre-blown up graphs $G^{\circ}_R$ and $G^{\circ}_B$. We let $A_R,A_B$ be the adjacency matrices of $G_R^{\circ}, G_B^{\circ}$ respectively. We include the event 
\begin{equation}  \label{eq:spectrum}  \|A_R - pJ_{n/r}\|_{op} \leq 3\sqrt{pn/r}  \end{equation}
and likewise for $A_B$. Here $J_{t}$ denotes the $t$ by $t$ matrix of all ones and $\|\cdot\|_{op}$ denotes the operator norm.

We also need a pseudo-randomness property that captures the typical interaction of the red and blue partitions $\{V_i\}_i$ and $\{ W_i \}_i$. We include into the event $\cA$ the following. For every $t \leq k$ and $S \in [n]^{(t)}$ we have 
\begin{equation} \label{item:A2} \max\big\{ |\pi_R(S)|,  |\pi_B(S)|\big\} \geq \delta t\, , \end{equation} where we set 
\begin{equation}\label{eq:def-delta} \delta  = (\log n)^{-1},\end{equation} for use throughout the paper. On a technical note, we require that there are not too many edges incident to a vertex $v$ that are in both $G_R'$ and $G_B'$. Here we include into $\cA$ the weak bound \begin{equation}\label{eq:double-degree} 
    \max_{v \in [n]} \big| N_{G_R'}(v) \cap N_{G_B'}(v) \big| \leq n^{1-\eta} p, 
\end{equation}
where $\eta=1/(4\ell)$.
Finally, we need the graph $G'$ to have good vertex expansion properties. For a graph $H$ and set of vertices $S \subset V(H)$ define 
\[\partial_H(S) = \big\{v \in V(H) \setminus S : N_H(v) \cap S \neq \emptyset\big\}\] to be the vertex boundary of $S$.  We include into the event $\cA$ the property that every set $S \subset [n]$ with $|S| \leq (2p)^{-1}$ satisfies \begin{equation}\label{eq:vertex-boundary}
|\partial_{G'}(S)|\geq \delta pn|S|/8\, .
\end{equation}
We now turn to show that properties \eqref{eq:degrees},\eqref{eq:spectrum},\eqref{item:A2},\eqref{eq:double-degree} and \eqref{eq:vertex-boundary} hold with high probability. The only elements that are not standard, or follow from well-known results are \eqref{item:A2} and \eqref{eq:vertex-boundary}. We isolate these here. 

\begin{observation}\label{obs:quasi-random-partition}
The event \eqref{item:A2} holds with high probability.    
\end{observation}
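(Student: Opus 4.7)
The plan is to fix $t$ and a set $S \in [n]^{(t)}$, bound $\P(|\pi_R(S)| < \delta t)$ by a union bound over which $m$ of the parts $V_i$ could cover $S$, use the independence of the red and blue partitions in \eqref{eq:randomparts} to \emph{square} that bound, and then union bound over $t \leq k$ and $S$. The squaring gained from independence is the crucial step.

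If $t < \log n$ then $\delta t < 1 \leq |\pi_R(S)|$ and the conclusion is automatic, so suppose $t \geq \log n$ and set $m = \lfloor \delta t\rfloor \geq 1$. The event $\{|\pi_R(S)| \leq m\}$ says $S$ is contained in a union of at most $m$ of the parts $V_i$. Since for a uniform random partition of $[n]$ into blocks of size $r$ the union of any fixed choice of $m$ labels is a uniform random $rm$-subset of $[n]$, a union bound over the choice of labels gives
\[ \P\bigl(|\pi_R(S)| \leq m\bigr) \leq \binom{n/r}{m}\frac{\binom{n-t}{rm-t}}{\binom{n}{rm}} \leq \Big(\frac{en}{rm}\Big)^{m}\Big(\frac{rm}{n}\Big)^{t}, \]
using $\binom{n-t}{rm-t}/\binom{n}{rm} \leq (rm/n)^t$. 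The independence and identical distribution of the red and blue partitions squares this bound, and summing $\binom{n}{t}$ times that square over $t$ yields
\[ \P\bigl(\neg \eqref{item:A2}\bigr) \leq \sum_{t=1}^{k}\binom{n}{t}\Big(\frac{en}{r\delta t}\Big)^{2\delta t}\Big(\frac{r\delta t}{n}\Big)^{2t} \leq \sum_{t=1}^{k}\Big(\frac{e\, r^{2}\delta^{2}t}{n}\Big)^{t}\Big(\frac{en}{r\delta t}\Big)^{2\delta t}. \]

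The two inputs in \eqref{eq:inequalities} now finish the calculation. The bound $r \leq 4\sqrt{n/k}$, equivalent to $pk = O(\log^{3} n)$, gives $r^{2}t/n = pt/(\log n)^{3} \leq 8$ for all $t \leq k$, so the first factor is $\bigl(O(1)/(\log n)^{2}\bigr)^{t} = \exp\bigl(-(2-o(1))\,t\log\log n\bigr)$. Since $\log\bigl(en/(r\delta t)\bigr) = O(\log n)$ uniformly in $t \geq 1$ and $\delta = 1/\log n$, the second factor is at most $\exp(O(t))$. Each summand is therefore $\exp\bigl(-(2-o(1))\,t\log\log n\bigr)$ and the sum over $t$ is $o(1)$.

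The main obstacle is exactly the reason \eqref{item:A2} is stated with a $\max$ rather than a single partition: a single-partition estimate would yield only $\exp(-O(k))$ for the dominant $t=k$ term, which does not beat the outer $\binom{n}{k}$ factor. Squaring the single-partition bound using independence of the two partitions in \eqref{eq:randomparts} doubles the saving from the $(r\delta t/n)^{t}$ term, turning $\exp(-O(t))$ into $\exp\bigl(-(2-o(1))\,t\log\log n\bigr)$, and it is this extra logarithmic-in-log factor that finally closes the union bound.
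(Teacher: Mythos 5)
Your argument is correct and follows essentially the same route as the paper: bound $\P(|\pi_R(S)| < \delta t)$ for a single partition via a union bound over which $\approx \delta t$ blocks could cover $S$ (your hypergeometric estimate $\binom{n/r}{m}\binom{n-t}{rm-t}/\binom{n}{rm}$ is exactly the paper's $\binom{n/r}{\delta t}\binom{r\delta t}{t}/\binom{n}{t}$ after simplification), square by independence of the red and blue partitions, and then union bound over $t$ and $S$ using $r \lesssim \sqrt{n/k}$. Your explicit dispatch of the $t < \log n$ case is a small extra point of care, but otherwise the estimates, the squaring trick, and the final geometric sum are the paper's.
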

\begin{proof}
For $t \leq k$, fix $S\in [n]^{(t)}$ and consider the probability that $|\pi_R(S)|<\delta t$. By symmetry, this is the same as fixing the partition and choosing $S\in [n]^{(t)}$ uniformly at random. Thus
\[ \PP\big(|\pi_R(S)|<\delta t \big) \leq t\binom{n/r}{\delta t}\binom{r\delta t}{t} \binom{n}{t}^{-1}\, ,\]
since there are $n/r$ parts of which we choose $<\delta t$ for the red projection. We then choose $S$ from the union of these parts, which has size $< r(\delta t)$.

By independence of the red and blue partitions we have 
\[ 
\PP\big(|\pi_R(S)|<\delta t\, \wedge \, |\pi_B(S)|<\delta t\big) \leq \bigg[t\binom{n/r}{\delta t}\binom{r\delta t}{t} \binom{n}{t}^{-1} \bigg]^2\, 
\leq \bigg( \frac{C\delta^2r^2}{\big(n/t\big)}\bigg)^t \binom{n}{t}^{-1}\, ,\]
for some constant $C>0$.
We note that, by our choices of $r,k$ at \eqref{eq:p-def}, we have $r\leq  4(n/k)^{1/2}$. Thus the above is $\ll 2^{-t}\binom{n}{t}^{-1}$. Thus we may union bound over all $S$ and $t$ to obtain the desired result.
\end{proof}

\begin{observation}\label{obs:vxexpand} 
The event \eqref{eq:vertex-boundary} holds with high probability. 
\end{observation}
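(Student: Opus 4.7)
The plan is to deduce the vertex expansion of $G'$ from the vertex expansion of the pre-blown-up graph $G_R^\circ$ (or $G_B^\circ$), using \eqref{item:A2} to ensure that $S$ is spread across many parts of at least one of the two random partitions. Conditioning on the event \eqref{item:A2}, which holds with high probability by Observation~\ref{obs:quasi-random-partition}, we see that for any $S \subset [n]$ with $|S| \leq (2p)^{-1} \leq k$, at least one of $|\pi_R(S)| \geq \delta |S|$ or $|\pi_B(S)| \geq \delta |S|$ holds. By symmetry we may assume the former and set $T = \pi_R(S) \subset [n/r]$, so that $|T| \leq |S| \leq (2p)^{-1}$.

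The key structural observation is that $|\partial_{G'}(S)| \geq r \cdot |\partial_{G_R^\circ}(T)|$. Indeed, for each $j \in \partial_{G_R^\circ}(T)$ we have $j \notin T$, so $V_j \cap S = \emptyset$, and $j$ has some neighbor $i \in T$ in $G_R^\circ$. Since $G_R'$ is the blow-up of $G_R^\circ$ along the parts $V_1,\dots,V_{n/r}$, every vertex of $V_j$ is adjacent in $G_R' \subseteq G'$ to every vertex of $V_i$, and in particular to some vertex of the nonempty set $S \cap V_i$. Thus $V_j \subseteq \partial_{G'}(S)$, and summing over the disjoint sets $V_j$ with $j \in \partial_{G_R^\circ}(T)$ gives the claim.

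It therefore suffices to prove the following vertex expansion property for $G_R^\circ \sim G(n/r, p)$ (and similarly for $G_B^\circ$): with high probability, every $T \subseteq [n/r]$ with $|T| \leq (2p)^{-1}$ satisfies $|\partial_{G_R^\circ}(T)| \geq p(n/r)|T|/8$. This is a routine Chernoff-plus-union-bound computation. For fixed $T$ with $|T| = t$, each vertex $v \in [n/r]\setminus T$ lies in $\partial_{G_R^\circ}(T)$ independently with probability $1-(1-p)^t \geq pt/2$ (using $pt \leq 1/2$), yielding $\E|\partial_{G_R^\circ}(T)| \geq p(n/r)t/4$. A Chernoff estimate then gives $\PP(|\partial_{G_R^\circ}(T)| < p(n/r)t/8) \leq \exp(-c\, p(n/r)\, t)$ for some constant $c>0$, and this comfortably beats the union bound $\binom{n/r}{t} \leq (en/(rt))^t$ because $pn/r = (pn)^{1/2}(\log n)^{3/2}$ is polynomial in $n$, while $\log(en/(rt)) = O(\log n)$.

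Combining the three steps gives $|\partial_{G'}(S)| \geq r \cdot p(n/r)|T|/8 = pn|T|/8 \geq \delta pn|S|/8$, as required. The only slightly delicate point is the third step (vertex expansion of $G(n/r,p)$ on sets up to size $(2p)^{-1}$), but the ample polynomial slack between $pn/r$ and $\log n$ makes the Chernoff bound straightforward; no finer property of the random partitions is needed beyond \eqref{item:A2}.
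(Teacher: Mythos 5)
Your proof is correct and follows essentially the same approach as the paper: use~\eqref{item:A2} to find a large projection $T=\pi_R(S)$ into $[n/r]$, observe that $|\partial_{G'}(S)|\ge r\,|\partial_{G_R^\circ}(T)|$ because $G_R'$ is a blow-up, and establish vertex expansion of $G_R^\circ\sim G(n/r,p)$ on sets of size at most $(2p)^{-1}$ by a Chernoff plus union bound. You spell out the Chernoff calculation in a bit more detail than the paper (which states it as ``by Chernoff and a union bound''), but the argument is the same.
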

\begin{proof}
By Observation~\ref{obs:quasi-random-partition} it suffices to show that \eqref{eq:vertex-boundary} holds with high probability conditioned on~\eqref{item:A2} holding.
Let $S \subset [n]$ satisfy $|S|\leq (2p)^{-1}$ and let $S_0 \subset S$ be a subset with the property $|\pi_R(S_0)|=|S_0|$ so that $S_0$ contains exactly one element from each $V_i$ that $S$ intersects. Write $T = \pi_R(S_0) \subset[n/r]$. Without loss, by \eqref{item:A2}, we have $|T| = |S_0|\geq \delta |S|$. Now write $t = |T|$ and note 
\[ \big| \partial_{G_R^\circ}(T) \big|  \sim  \text{Bin}\big(n/r-t, 1-(1-p)^{t}\big), \] which has mean $\geq tpn/(4r)$ since $pt\leq 1/2$. By Chernoff and a union bound, we have
\[
|\partial_{G^\circ_R}(T)|\geq  tpn/(8r)
\]
for all $T\subset[n/r]$ with $t\leq (2p)^{-1}$, with high probability. Thus, with high probability,
\begin{equation*}
|\partial_{G'}(S)|\geq r |\partial_{G^\circ_R}(T)|\geq |S_0|pn/8 \geq  \delta |S|pn/8 \, ,
\end{equation*}
where for the first inequality we observe that if $i\in \partial_{G^\circ_R}(T)$ then $V_i\subset \partial_{G'}(S)$.
\end{proof}

We now prove that $\cA$ holds with high probability.

\begin{lemma} $\cA$ holds with high probability. 
\end{lemma}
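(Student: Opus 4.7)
The plan is straightforward. Observations~\ref{obs:quasi-random-partition} and~\ref{obs:vxexpand} already establish \eqref{item:A2} and \eqref{eq:vertex-boundary} with high probability, so after a union bound it only remains to verify each of \eqref{eq:degrees}, \eqref{eq:spectrum} and \eqref{eq:double-degree} separately.

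For the degree bound \eqref{eq:degrees} I would apply a standard Chernoff bound: for each $x\in[n/r]$, $d_{G_R^\circ}(x)\sim\mathrm{Bin}(n/r-1,p)$ has mean $p(n/r)(1+O(r/n))$, and by \eqref{eq:p-def} we have $p(n/r)=(pn)^{1/2}(\log n)^{3/2}$, a positive power of $n$. Chernoff gives multiplicative error $n^{-c}$ with failure probability $\exp(-n^{c'})$ for suitable $c,c'>0$, and a union bound over $x$ and over the two graphs completes the step. For the spectral bound \eqref{eq:spectrum}, since $p(n/r)$ is polynomial in $n$ (and in particular $\gg\log^C(n/r)$ for every $C$), the standard spectral concentration inequality for sparse Erd\H{o}s--R\'enyi graphs yields $\|A_R-pJ_{n/r}\|_{op}\leq(2+o(1))\sqrt{pn/r}$ with high probability, comfortably inside the claimed constant of $3$; the same argument handles $A_B$.

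The main substantive step is \eqref{eq:double-degree}. Fix $v\in[n]$ and let $i_0,j_0$ be the indices with $v\in V_{i_0}\cap W_{j_0}$. Writing $T=N_{G_R'}(v)$, the key identity is
\[
    |N_{G_R'}(v)\cap N_{G_B'}(v)|=\sum_{j\neq j_0}|T\cap W_j|\cdot\1[jj_0\in G_B^\circ].
\]
The plan is to condition on the two partitions and on $G_R^\circ$: on the event of \eqref{eq:degrees} we have $|T|=r\cdot d_{G_R^\circ}(i_0)\leq 2pn$, and since $G_B^\circ$ is independent of the conditioning, the indicators $\1[jj_0\in G_B^\circ]$ form an independent $\mathrm{Bernoulli}(p)$ family. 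The conditional expectation of the sum is thus $p|T|\leq 2p^2n$. Since $p=n^{-1+1/(\ell-1)+\eps-o(1)}$ and $\ell\geq 5$, both $1-1/(\ell-1)-\eps$ and $1/(\ell-1)+\eps$ are positive, so I may fix $\eta=\eta_\ell>0$ smaller than each; this forces $2p^2n\leq\tfrac12 n^{1-\eta}p$ and also makes $n^{1-\eta}p$ a positive power of $n$. Bernstein's inequality, applied with per-term bound $M=\max_j|T\cap W_j|=O(\log n)$ (which follows from a hypergeometric Chernoff on the independent blue partition, in the same spirit as the proof of Observation~\ref{obs:quasi-random-partition}), then yields the required deviation bound with failure probability $\exp(-n^{c''})$ for some $c''>0$. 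A union bound over $v\in[n]$ completes the argument.

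The only step presenting any real obstacle is \eqref{eq:double-degree}: the events $\{u\in N_{G_R'}(v)\cap N_{G_B'}(v)\}$ are correlated across $u$ because all vertices sharing a red part $V_i$ have identical red adjacency to $v$, so a direct Chernoff bound is unavailable. The two-stage conditioning above, first freezing $G_R^\circ$ and then exploiting the independence of $G_B^\circ$, is precisely what linearises the problem into a sum of independent weighted Bernoullis to which Bernstein's inequality applies cleanly.
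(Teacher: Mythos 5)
Your proposal is correct and follows essentially the same route as the paper. For \eqref{eq:degrees} and \eqref{eq:spectrum} the paper simply cites standard results (Bollob\'as for degrees, Vu's theorem for the spectral gap), and for \eqref{item:A2} and \eqref{eq:vertex-boundary} it invokes the same two observations you do. For \eqref{eq:double-degree}, the underlying decomposition in your argument is identical to the paper's: condition on the red side and the partitions so that $T=N_{G_R'}(v)$ is frozen, then write $|T\cap N_{G_B'}(v)|=\sum_j |T\cap W_j|\,\xi_j$ as a weighted sum of independent $\mathrm{Ber}(p)$ random variables, and apply a concentration inequality. The only tactical difference is the choice of inequality. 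The paper uses Azuma--Hoeffding with the crude per-part bound $|T\cap W_j|\leq r$, which is completely deterministic and yields $\exp(-2t^2/(r|T|))$; combined with $r\leq\sqrt{np}$ this already gives the claim with $\eta=1/(4\ell)$ and needs no auxiliary step. You instead invoke Bernstein's inequality with the sharper per-term bound $M=\max_j|T\cap W_j|=O(\log n)$, which requires the preliminary hypergeometric tail estimate over the blue partition and a further union bound over $v,j$. Both routes close, but the paper's version is slightly leaner since it avoids having to establish (and condition on) the extra $O(\log n)$ bound; the trade-off is that your $M$ is much smaller than $r$, so the eventual exponent is more generous than needed. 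If you prefer Bernstein, do note that the $O(\log n)$ bound on $\max_j|T\cap W_j|$ is a high-probability event over the blue partition only (after fixing $T$), which must hold uniformly over all $v$; the hypergeometric tail decays like $n^{-\Omega(\log n)}$, so the union bound over the at most $n\cdot(n/r)$ pairs $(v,j)$ is fine, but this should be made explicit.
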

\begin{proof} 
By the Chernoff bound and a union bound over $x\in V(G_R^\circ)$ we have 
\[ 
d_{G^{\circ}_R}(x) = pn/r + \sqrt{pn/r} \cdot \log n \quad \text{for all} \quad x\in V(G_R^\circ)
\]
with high probability,
 and likewise for $G_B^{\circ}$. Recalling the definition of $p$ and $r$ from~\eqref{eq:p-def} then shows that \eqref{eq:degrees} holds with high probability.
The fact that \eqref{eq:spectrum} holds follows from Theorem 1.4 in \cite{vu2005}. We just saw that \eqref{item:A2} holds by Observation~\ref{obs:quasi-random-partition} and \eqref{eq:vertex-boundary} holds by Observation~\ref{obs:vxexpand}. 

To prove \eqref{eq:double-degree}, we may assume \eqref{eq:degrees} holds in which case we have $|N_{G_R'}(v)| = (1 + o(1)) np$ for all $v$. Fix $S\subset [n]$ and note that
\[
|N_{G_B'}(v)\cap S| = \sum_{i=1}^{n/r} |W_i \cap S|\cdot \xi_i
\]
where the $\xi_i$ are i.i.d.\ $\text{Ber}(p)$ random variables. Since $|W_i \cap S|\leq r$ for all $i$, we have $\sum_i |W_i \cap S|^2\leq r|S|$ and so the Azuma-Hoeffding inequality gives 
\begin{align}\label{eq:Azuma}
\P(|N_{G_B'}(v)\cap S|\geq p|S| +t) \leq \exp \big(-2t^2/(r|S|) \big)\, .
\end{align}
Applying \eqref{eq:Azuma} with a fixed realisation $S=N_{G'_R}(v)$ with $|S|=(1+o(1))np$ and recalling that $r\leq \sqrt{np}$ shows that  $|N_{G_B'}(v)\cap N_{G_R'}(v)|\leq n^{1-1/(4\ell)}p$ with probability at least $1-1/n^2$. A union bound over $v$ shows that~\eqref{eq:double-degree} holds with high probability.
\end{proof}

\section{The number of \texorpdfstring{$J$}{J} to \texorpdfstring{$J$}{J} walks}

When calculating if a given set $I \in [n]^{(k)}$ is independent, we will make use of the event $\cA$ and first take a set of representatives $J \subset I$, with $|J| \geq \delta k$ so that $|\pi_R(J)| = |J|$ or $|\pi_B(J)| = |J|$. We will then use the following lemma to show there are few pairs in $J^{(2)}$ that close a cycle of length $\ell$. Recall from \eqref{eq:def-delta} that we set $\delta=(\log n)^{-1}$.

\begin{lemma}\label{lem:path-counting}
Let $J \in [n]^{(\delta k)}$. On the event $\cA$, the number of walks 
of length $\ell-1$, in $G'$, starting and ending in $J$ is at most 
\begin{equation}\label{eq:path-counting} \delta^{-2} 2^\ell p^{\ell-1}n^{\ell-2} |J|^2.\end{equation}
\end{lemma}

One might be temped to prove the above lemma by first showing that for a fixed $J$ the above bound holds with extremely high probability and then union bounding over all $J$.
While we believe that this method can be made to work, the existence of vertices of $G'$ with atypically high degree into $J$ (which may exist) rules out any simple application of standard concentration inequalities.

To prove Lemma~\ref{lem:path-counting} we instead use a spectral argument, which relies on the quasi-random properties of the graph $G'$. We stress that the reader should not confuse what we are doing here with the ``spectral method'' of \cite{mubayi2024}. 

For this, we first note that we can find a compact expression for the adjacency matrix of the multi-graph $G'$ (here edges are recorded with multiplicity). Recall that $A_R,A_B$ are the adjacency matrices of $G^{\circ}_R$ and $G^{\circ}_B$. We first note that the adjacency matrices of $G_B'$ and $G_R'$ can be obtained from $A_R,A_B$ by taking a tensor product with the $r\times r$ all ones matrix $J_r$. That is, if $A'_R$, $A'_B$ are the adjacency matrices of $G_R',G'_B$, we have that $A'_R =  A_R \otimes J_{r} $ and $A'_B =  A_B \otimes J_{r}$. We then observe that the adjacency matrix of $G'$ is
\[ A = A_R \otimes J_{r}  +  P^t\big( A_B \otimes J_{r}\big) P ,\]
where $P$ is a permutation matrix of a permutation which maps $V_i$ to $W_i$ for all $i$. The key here is the following observation. 

\begin{observation}\label{obs:J-Jwalk}
The number of walks from $J$ to $J$ in $G'$ is precisely $\langle \1_J , A^{\ell-1} \1_J \rangle$.
\end{observation}

To estimate this inner product, we use the spectral theorem to express
\begin{align}\label{eq:Adecomp} A = \mu \big( v \otimes v \big) + M, 
\end{align}
where $\mu$ is the largest eigenvalue of $A$, $v$ is a corresponding unit eigenvector and $v \in \ker(M)$. It follows that
$A^{\ell-1} = \mu^{\ell-1}( v \otimes v) + M^{\ell-1}$ and thus
\begin{equation}\label{eq:spectral-expansion} \langle \1_J , A^{\ell-1} \1_J \rangle = \mu^{\ell-1}\langle \1_J, v \rangle ^2 + \langle \1_J,  M^{\ell-1}\1_J  \rangle \leq |J|^2\mu^{\ell-1}\|v\|_{\infty} + |J|^2\|M\|_{op}^{\ell}  . \end{equation}
We now understand this quantity in three (fairly easy) steps. First we would like to show that $\mu \approx pn$. We then show that the operator norm of $M$ is small, which implies that the second term on the right hand side of \eqref{eq:spectral-expansion} is small. Finally, and in the only difficult step, we show $\|v\|_{\infty} \approx n^{-1/2}$. These together allow us to conclude the desired bound.  

We first observe the following.

\begin{observation}\label{obs:mu} On the event $\cA$ we have $\mu = (1+o(1))2pn$.
\end{observation}
\begin{proof}
 By considering the maximum row sum we have that $\mu \leq (1+O(n^{-c}))2pn$ where $c$ is as in~\eqref{eq:degrees}. On the other hand, by considering the Rayleigh quotient we have 
\[ \mu \geq n^{-1} \langle \1 ,  A \1 \rangle = (1+O(n^{-c}))2pn. \qedhere \]
\end{proof}

To show $\|M\|_{op}$ is small we make the following observation. 

\begin{observation} On the event $\cA$ we have
\[ \big \| A_R \otimes J_{r} - pJ_n \big\|_{op} \leq 3\sqrt{rpn}. \]
\end{observation}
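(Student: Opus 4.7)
The plan is to reduce the bound to the already-established spectral estimate $\|A_R - pJ_{n/r}\|_{op} \leq 3\sqrt{pn/r}$ from \eqref{eq:spectrum} using the multiplicativity of the operator norm under tensor products. The key observation is that, under the natural block identification of $[n]$ with $[n/r] \times [r]$ coming from the red partition, the all-ones matrix factors as
\[
J_n = J_{n/r} \otimes J_r,
\]
and consequently
\[
A_R \otimes J_r - p J_n = (A_R - pJ_{n/r}) \otimes J_r.
\]

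Next I would invoke the standard identity $\|M \otimes N\|_{op} = \|M\|_{op}\,\|N\|_{op}$ (which follows, for instance, from the fact that singular values of a tensor product are products of singular values, or directly by writing any unit vector in the tensor basis and optimising). Since $J_r$ is rank-one with $J_r \mathbf{1} = r\mathbf{1}$, we have $\|J_r\|_{op} = r$. Hence
\[
\bigl\|A_R \otimes J_r - pJ_n\bigr\|_{op} = \bigl\|A_R - pJ_{n/r}\bigr\|_{op} \cdot \|J_r\|_{op} \leq 3\sqrt{pn/r}\cdot r = 3\sqrt{rpn},
\]
using \eqref{eq:spectrum} in the last step. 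This completes the proof.

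There is no real obstacle here, as the statement is essentially an algebraic manipulation combined with a factor of $\|J_r\|_{op} = r$. The only thing to be mildly careful about is the identification of the index set: one is choosing an ordering of $[n]$ compatible with the red partition $V_1, \dots, V_{n/r}$ so that $A_R \otimes J_r$ really does act on $\R^n$ in the form used above; this is the same ordering implicit in the definition of $A$ preceding the observation.
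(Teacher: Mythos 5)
Your proof is correct and takes essentially the same approach as the paper: both factor $A_R \otimes J_r - pJ_n = (A_R - pJ_{n/r}) \otimes J_r$ and then use that the spectrum of a tensor product is the set of pairwise products of spectra, so the operator norm picks up a factor of $\|J_r\|_{op} = r$. The paper phrases this via eigenvalues while you phrase it via $\|M\otimes N\|_{op} = \|M\|_{op}\|N\|_{op}$, which is the same fact.
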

\begin{proof}
From the property \eqref{eq:spectrum}, on the event $\cA$ we have $\|A_R - p J_{n/r}\|_{op} \leq 3\sqrt{pn/r}$. Now write 
\[ A_R \otimes J_{r} - pJ_n = (A_R - pJ_{n/r}) \otimes J_r \] and recall that the eigenvalues of this tensor product are the pairwise products of the eigenvalues of $A_R - pJ_{n/r}$ and $J_r$.     \end{proof}

\begin{lemma}\label{lem:A-decomp} On the event $\cA$ we have $\|M\|_{op} \leq 6\sqrt{rpn}$.
\end{lemma}
\begin{proof}
Assume $\|M\|_{op} > 6\sqrt{rpn}$. 
Since $\mu = (1+o(1))2pn \gg \sqrt{rpn}$, there exists orthogonal unit eigenvectors $u,v$ with 
$\|Au\|_{2}, \|Av\|_2 > 6\sqrt{rpn}$. Thus there is a unit vector $w \in \Span\{u,v\}$  with $\langle w, \1 \rangle = 0$ and $\|Aw\|_2 > 6\sqrt{rpn}$. But this contradicts that 
\[ \big\| A - 2pJ_{n}\big\|_{op} \leq  \big\| A_R \otimes J_{r} -pJ_n \big\|_{op} +  \big\| P^t\big( A_B \otimes J_{r} - pJ_{n} \big)P \big\|_{op} \leq 6\sqrt{rpn}. \qedhere \]
\end{proof}

We now prove our bound on $\|v\|_{\infty}$.

\begin{lemma}\label{lem:perron-evector-flat} On the event $\cA$, we have  $\| v \|_{\infty} \leq \delta^{-1} n^{-1/2}$.
\end{lemma}
\begin{proof}
By the Perron-Frobenius theorem, we know that $v$ has non-negative entries. Without loss of generality, assume $v_1$ is the largest entry of $v$ and assume for a contradiction that $v_1\geq \delta^{-1}n^{-1/2}$.

Let $\gamma_0>0$ be such that $\gamma_0^{1/2^{\ell+1}}=\delta/100$. We claim that for all $\gamma \geq \gamma_0$ we have \begin{equation}\label{eq:propagate-big-evec}
v_x \geq (1-\gamma)v_1 \implies \big|\big\{y \in N_{G'}(x) : v_y \geq (1-\gamma^{1/2})v_1 \big\}\big| \geq 
(1-2\gamma^{1/2})d_{G'}(x)\,.
\end{equation} To see this we prove the contrapositive. Note that  $A v=\mu v$ 
which implies
\begin{equation}\label{eq:eval-equation}
\mu v_x =  \sum_{y\sim x} v_y + O\big(pn^{1-\eta} v_1 \big),
\end{equation}
where the error term is from the contribution of edges coloured both red and blue and we use \eqref{eq:double-degree} to control this term.
Then by the contrapositive assumption for $x$ we have
\begin{align*}
\mu v_x&\leq (1-2\gamma^{1/2})d_{G'}(x)\cdot v_1+2\gamma^{1/2}d_{G'}(x)\cdot(1-\gamma^{1/2})v_1 + O\big(pn^{1-\eta} v_1 \big)\\ &= (1-2\gamma)d_{G'}(x) v_1 +  O\big(pn^{1-\eta} v_1 \big)\, .
\end{align*}
Note that by~\eqref{eq:degrees} and~\eqref{eq:double-degree} we have  $d_{G'}(x)=(1+o(1))2pn$ and so $\mu=(1+o(1))d_{G'}(x)$ by Lemma~\ref{lem:A-decomp}. After noting that $\gamma\geq \gamma_0 \gg n^{-\eta}$, we conclude that $v_x<(1-\gamma)v_1$. This establishes~\eqref{eq:propagate-big-evec}.

 Now for $t\geq 1$, define 
\[ S_t = \big\{ x : v_x \geq (1-\gamma_0^{1/2^{t}})v_1 \big\} . \]

Let $\tau$ be the least integer such that $|S_\tau| \geq (2p)^{-1}$. We show $\tau \leq \ell$. Now assume that $|S_t|\leq (2p)^{-1}$ for some $t\leq \ell$, otherwise we are already done.
 By Observation~\ref{obs:vxexpand}, we have
 \begin{align}\label{eq:StExp}
 |N_{G'}(S_{t})|\geq \delta np |S_t|/8\, .
  \end{align}
 Moreover, for each $x\in S_t$, the degree of $x$ into $N_{G'}(S_{t})\backslash S_{t+1}$ is at most $2\gamma_0^{1/2^{t+1}}d_{G'}(x)$ by \eqref{eq:propagate-big-evec}. It follows that 
 \[ |N_{G'}(S_{t})\backslash S_{t+1}|\leq (1+o(1))4np \gamma_0^{1/2^{t+1}}  |S_t|\leq \delta np|S_t|/16 ,\] recalling that $t\leq \ell$, and therefore \begin{equation}\label{eq:St-grow}
     |S_{t+1}|\geq \delta np |S_t|/16
 \end{equation} by \eqref{eq:StExp}. By~\eqref{eq:St-grow} and the fact that $np\geq n^{1/(\ell-1)}$ we have that $\tau\leq \ell$. 
 
 Let $S_\tau'\subset S_\tau$ with $|S_\tau'|= (2p)^{-1}$. Then arguing as above we see that $|S_{\tau+1}|\geq \delta np  |S'_{\tau}|/16=\delta n/32$. It follows that 
 \[\|v\|_2^2\geq (\delta n/32)(1-\gamma_0^{1/2^{\tau+1}})v_1^2>1,\] a contradiction. 
\end{proof}

We may now finish the proof of Lemma~\ref{lem:path-counting}.

\begin{proof}[Proof of Lemma~\ref{lem:path-counting}] 
We recall Observation~\ref{obs:J-Jwalk} that the number of $J$ to $J$ walks in $G'$ is
$\langle \1_J , A^{\ell-1} \1_J \rangle$. As before, apply~\eqref{eq:Adecomp} to write  
$A^{\ell-1} = \mu^{\ell-1}( v \otimes v) + M^{\ell-1}$ and thus
\[ \langle \1_J , A^{\ell-1} \1_J \rangle = \mu^{\ell-1}\langle \1_J, v \rangle ^2 + \langle \1_J,  M^{\ell-1}\1_J  \rangle. \]
For the first term, we have $\mu = (1+o(1))2pn$ by Lemma~\ref{lem:A-decomp} and by Lemma~\ref{lem:perron-evector-flat}
\[ \langle \1_J, v \rangle  = \sum_{i \in J} v_j \leq \delta^{-1}|J|n^{-1/2}\, . \]
 For the second, we have by Lemma~\ref{lem:A-decomp}
\[ \langle \1_J,  M^{\ell-1} \1_J \rangle \leq |J| \| M^{\ell-1}\|_{op }\leq |J|\|M\|_{op}^{\ell-1} \leq  |J|(6\sqrt{rpn})^{\ell-1}. \]
Putting this together we have that the number of $J$ to $J$ walks is at most 
\[ \delta^{-2} 2^\ell p^{\ell-1}n^{\ell-2} |J|^2\]
since $r \leq (pn)^{1-\frac{2}{\ell-1}}$ and $|J|=\delta k\geq 1/p$, by our choices at \eqref{eq:p-def} and since $\ell \geq 5$.
\end{proof}

\section{Independent set calculation}
For the independent set calculation, we fix $I\in [n]^{(k)}$ and estimate the probability that $I$ is an independent set in $G'-F$. For this, it will be crucial to ensure that not too many edges in $G'\cap I^{(2)}$ are removed in our edge deletion step.  
 Here our first step is to show that the event $\cA$ guarantees this. Say that a walk in $G' = G'_R \cup G'_B$ is \emph{alternating} if it alternately takes steps in the red and blue graphs $G_R', G_B'$.

 We define the set of \emph{closed} pairs by
\[ \cC = \big\{ xy \in [n]^{(2)} : \text{ there exists a non-alternating } xy \text{-walk of length } \ell-1 
\text{ in } G'\big\}. \]
We highlight that only $e \in \cC$ are in danger of being removed in our edge deletion step.
The key point is that if $xy$ is joined by a non-alternating walk
then 
$xy$ must be joined by $\geq r$ non-alternating {walks of the same length.   
We use this to bound the number of closed edges inside of an independent set.

\begin{lemma}\label{lem:closed-edges} On the event $\cA$, we have  $|J^{(2)} \cap \cC| = o(|J|^{2})$ for every $J \in [n]^{(\delta k)}$.
\end{lemma}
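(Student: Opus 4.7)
The plan is to show that each closed pair $xy \in J^{(2)} \cap C$ gives rise to at least $r - \ell$ distinct non-alternating paths of length $\ell-1$ between its endpoints, and then invoke Lemma~\ref{lem:path-counting} together with \eqref{eq:inequalities} to conclude.  Fix $J \in [n]^{(\delta k)}$.

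My first step is a structural observation: \emph{any non-alternating path in $G'$ contains two consecutive edges both lying in $G'_R$, or both lying in $G'_B$.} Indeed, if $S_i \subseteq \{R,B\}$ denotes the set of colour graphs containing the $i$th edge of a path, and no two consecutive $S_i, S_{i+1}$ intersect, then each $S_i$ is a singleton and the sets alternate between $\{R\}$ and $\{B\}$; assigning each edge the unique colour in its set shows the path is alternating, a contradiction.

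Next, given a non-alternating path $P = v_0 v_1 \cdots v_{\ell-1}$ with, say, $v_{i-1}v_i,\, v_i v_{i+1} \in G'_R$ and $v_i \in V_j$, I exploit the blow-up structure: since $G'_R$ is an $r$-blow-up of $G^{\circ}_R$, both $v_{i-1}$ and $v_{i+1}$ are red-adjacent to every vertex of $V_j$. Thus for each $v_i' \in V_j \setminus V(P)$, the sequence obtained from $P$ by replacing $v_i$ with $v_i'$ is again a non-alternating path in $G'$ with the same endpoints (the replaced edges are still both red), and these paths are pairwise distinct since they differ in position $i$. This produces at least $r - \ell$ distinct non-alternating paths between the endpoints of $P$.

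Summing this lower bound over the pairs in $J^{(2)} \cap C$, the total number of non-alternating $J$-to-$J$ paths of length $\ell-1$ in $G'$ is at least $(r-\ell)|J^{(2)} \cap C|$. By Lemma~\ref{lem:path-counting} this total is at most $\delta^{-2} 2^\ell p^{\ell-1} n^{\ell-2} |J|^2$. Using $r - \ell \geq r/2$ together with the bound $p^{\ell-1}n^{\ell-2}/r \ll (\log n)^{-2} = \delta^2$ from \eqref{eq:inequalities}, one rearranges to obtain
\[ |J^{(2)} \cap C| \leq 2\delta^{-2} \cdot 2^\ell \cdot \frac{p^{\ell-1}n^{\ell-2}}{r} \cdot |J|^2 = o(|J|^2), \]
as required. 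The main technical point in the argument is really just the structural observation identifying a monochromatic consecutive pair; the rest is a clean combinatorial count that feeds the blow-up symmetry into the spectral path-counting lemma.
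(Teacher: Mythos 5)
Your proof is correct and takes essentially the same approach as the paper: the paper asserts without proof, in the paragraph preceding the lemma, that a closed pair must be joined by $\geq r$ non-alternating walks of length $\ell-1$ and then feeds this directly into Lemma~\ref{lem:path-counting}, and your argument fills in precisely this implicit step via the structural observation (a non-alternating path has two consecutive edges of the same colour) together with the blow-up replacement. One small imprecision worth flagging: the replacement sequence need not itself be \emph{non-alternating}, since an edge of $G'$ can lie in both $G'_R$ and $G'_B$ and the new colour sets at positions $i,i+1$ may admit an alternating colouring; this is harmless, however, because Lemma~\ref{lem:path-counting} upper-bounds all $J$-to-$J$ paths (indeed walks) of length $\ell-1$, not only the non-alternating ones, so the chain $(r-\ell)\,|J^{(2)}\cap C|\leq |\{\text{$J$-to-$J$ paths}\}|\leq \delta^{-2}2^{\ell}p^{\ell-1}n^{\ell-2}|J|^2$ goes through unchanged.
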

\begin{proof}
We note carefully that  
\[ \big| \cC \cap J^{(2)}\big| \cdot r\, \leq |\{ \text{walks of length $\ell-1$ starting and ending in } J \}| \leq  \delta^{-2} 2^\ell p^{\ell-1}n^{\ell-2} |J|^2,\]
where the walks are counted in the graph $G'$ and the second inequality holds by Lemma~\ref{lem:path-counting} and the fact that $\cA$ holds. Thus we have
\[ \big| \cC \cap J^{(2)}\big| \leq \delta^{-2} 2^\ell \big( p^{\ell-1}n^{\ell-2} r^{-1}\big) |J|^2 \ll \delta^{-2}(\log n)^{-2} |J|^{2} = |J|^2, \]
where we used that our choice of $r,p$ satisfy \eqref{eq:inequalities}.
\end{proof}

We are now ready to prove our main lemma on independent sets. For any (multi) graph $H$, let $\cI(H)$ be the set of independent sets in $H$. For this, we recall that $F$ is the set of edges we deleted from $G'$ to form $G$ edges. In what follows, we partition $F$ based on their colour $F=F_R\cup F_B$.

\begin{lemma}\label{lem:ind-sets} For $I \in [n]^{(k)}$ we have
\[ \PP\big( I \in \cI(G'-F) \wedge \cA  \big) \leq (1-p)^{(\delta k)^2/4}. \]
\end{lemma}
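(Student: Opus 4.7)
The plan is to reduce the bound to a standard binomial estimate by working inside a carefully chosen subset $J\subseteq I$. First, using property~\eqref{item:A2} of $\cA$ applied to $I$, I would select $J\subseteq I$ with $|J|=\delta k$ such that $\pi_R$ is injective on $J$; the symmetric case where $\pi_B$ is injective is handled identically. Since $J$ is then a transversal of the red partition, each pair $xy\in J^{(2)}$ corresponds to a distinct pair $\pi_R(x)\pi_R(y)\in\pi_R(J)^{(2)}$, and hence the red edges $E_R := G'_R|_J$ biject with the edges of $G^\circ_R$ inside $\pi_R(J)^{(2)}$. Conditional on the partitions, each pair in $J^{(2)}$ is independently in $E_R$ with probability $p$.

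Next, I would observe that if $I\in\cI_k(G)$, then every edge of $G'$ with both endpoints in $I$ was deleted in the edge-deletion step and is therefore closed; in particular $E_R \subseteq C\cap J^{(2)}$. Applying Lemma~\ref{lem:closed-edges} on $\cA$ gives $|C\cap J^{(2)}| = o(|J|^2)$, so for $n$ large we have $\binom{|J|}{2} - |C\cap J^{(2)}| \geq (\delta k)^2/4$. Hence for any fixed $T\subseteq J^{(2)}$ with $|T|\leq |C\cap J^{(2)}|$ on $\cA$, conditional on the partitions,
\[
\PP(E_R\subseteq T) = (1-p)^{\binom{|J|}{2} - |T|} \leq (1-p)^{(\delta k)^2/4}.
\]

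The main obstacle is that $T := C\cap J^{(2)}$ is random and not a priori independent of $E_R$, because a path of length $\ell-1$ witnessing closedness can traverse an edge of $E_R$. I would handle this by conditioning on the $\sigma$-algebra $\cF$ generated by the partitions, by $G^\circ_B$, and by $G^\circ_R$ restricted to $[n/r]^{(2)}\setminus \pi_R(J)^{(2)}$: under $\cF$ the set $E_R$ is still i.i.d.\ Bernoulli$(p)$ and independent of $\cF$. The contribution to $C\cap J^{(2)}$ coming from paths that pass through a second vertex of $J$ is of lower order since $|J|\ll n$, so up to this negligible correction $C\cap J^{(2)}$ is $\cF$-measurable and still of size at most $|J|^2/4$. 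Applying the displayed inequality conditional on $\cF$ and taking expectation over $\cF$ then yields the claimed bound.
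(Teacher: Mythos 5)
Your overall strategy — pass to a transversal $J$ of $I$, reduce to a Bernoulli calculation on the $\binom{|J|}{2}$ independent red coin flips, and use Lemma~\ref{lem:closed-edges} to show most pairs are not closed — is the right framework and matches the paper up to the point where the real work begins. But the step where you dismiss the dependence between $E_R := G'_R|_J$ and $C\cap J^{(2)}$ as a ``negligible correction'' is precisely the crux of the proof, and your argument for it does not hold up.

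Two specific problems. First, you misidentify the source of non--$\cF$-measurability: the closedness of a pair $x_1x_2 \in J^{(2)}$ can be witnessed by a length-$(\ell-1)$ path that never revisits $J$ but nevertheless uses a red edge $f \in V_i\times V_j$ with $\pi_R(x_1)=i$, $\pi_R(x_2)=j$; such an $f$ is \emph{coupled} to $x_1x_2$ (present in $G'_R$ if and only if $x_1x_2$ is), so its presence is governed by one of the very coin flips that constitute $E_R$. Counting paths ``through a second vertex of $J$'' therefore misses the problematic paths entirely. Second, and more fundamentally, even if the set of ``problematic'' pairs is small, you cannot conclude $\PP(E_R\subseteq C\mid\cF)\leq(1-p)^{(\delta k)^2/4}$ by treating $C$ as ``approximately $\cF$-measurable.'' The set $J^{(2)}\setminus C$ is a function of $E_R$ given $\cF$, and the bound you want is of the form $\PP\big(E_R\cap f(E_R)=\emptyset\mid\cF\big)\leq(1-p)^{|J|^2/4}$; knowing $|f(E_R)|\geq |J|^2/4$ pointwise does not deliver this, because $f(E_R)$ could in principle adapt to avoid $E_R$. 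Indeed a coupled edge $f$ together with $\cF$-edges can form a path that closes $x_1x_2$ exactly when $x_1x_2\in G'_R$, so for such a pair $\PP(x_1x_2\in G'_R \text{ and } x_1x_2\in C \mid \cF)$ is $p$, not $p\cdot o(1)$ — the correlation is total, not small, and no cardinality estimate makes it go away.

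The paper resolves this with a deterministic argument (Claim~\ref{cl:ind-set-imp}) that you do not reproduce and that genuinely uses the edge-deletion rule's ordering. It exposes $G_0$ (your $\cF$), defines $O\subseteq J^{(2)}$ to be the pairs that do not close a cycle using $G_0$ alone — an $\cF$-measurable set with $|O|\geq(\delta k)^2/4$ on $\cA$ — and then proves that if $J$ is independent in $G_R$ then $O\cap G'_R=\emptyset$. The proof of this implication is not a counting estimate: it argues that the smallest (in the red ordering) edge $e\in O\cap G'_R$ cannot be deleted, because any $C_\ell$ accounting for its deletion must contain a strictly larger coupled red edge $f'$, so $e$ is never the largest red edge and survives. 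This is the step your proposal replaces with a hand-wave, and it is exactly where the carefully chosen deletion rule (delete the \emph{largest} edge of the opposite colour to the apex) is used. Without it, or some substitute decoupling argument, the bound does not follow.
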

\begin{proof} Fix $I\subset V(G')$ and assume that $\cA$ holds. Without loss, assume that $|\pi_R(I)| \geq \delta k$ by \eqref{item:A2} in the definition of $\cA$. Let $J \subset I$ be such that $|\pi_R(J)| = |J| = \delta k$. We restrict our attention to bounding the probability that $J$ is independent in the red graph. That is
\begin{equation}\label{eq:ind-set-calc1}  
\PP\big( I \in \cI(G'-F) \wedge \cA  \big)  \leq\PP\big( J \in \cI(G_R'-F_R)  \wedge \cA  \big) . 
\end{equation}

We now define graphs $G_0 \subset G_1 \subset \cdots \subset G'$, incrementally exposing more randomness. We first expose all of the pairs in $G_B^\circ$ and all of pairs in $G_R^\circ$ that are not in $\pi_R(J)$. That is, we define the random graph $G_0\subset G'$ by
\[ G_0 = G'_B  \cup \big\{ f\in G'_R : \pi_R(f)\not\subset \pi_R(J)  \big\}\, . \]
For $i\ge 0$, and given $G_i$, define $\cC_i \subset J^{(2)}\cap\cC$ to be the set of $xy$ for which there exists a non-alternating $xy$-walk of length $\ell-1$ in $G_i$. %Now given $G_i$, 
Define $G_{i+1}$ to be 
\[ G_{i+1} = \big\{ e \in G_R' : \pi_R(e) = \pi_R(f) \text{ for some } f\in \cC_i \big\} \cup G_i. \]

Now define  
\[ G_{\ast} = \bigcup_{i \geq 0} G_i \qquad \text{ and } \qquad \cO = J^{(2)} \setminus \bigcup_{i\geq 0} \cC_i.\]

We now make the following deterministic claim that allows us to move from working with $G_R'-F_R$ to working with $G_R'$.
\begin{claim}\label{cl:ind-set-imp}
If $J \in \cI(G_R'-F_R)$ then $G'_{R} \cap \cO = \emptyset.$ 
\end{claim}
\begin{proof}[Proof of Claim~\ref{cl:ind-set-imp}]
    We prove the contrapositive. Assume $ G_R' \cap \cO \neq \emptyset$ and let $e \in G_R' \cap \cO  $ be minimal with respect to the red ordering  (recall the definition of the ordering from Section~\ref{sec:edge-deletion}). We will show that $e\notin F_R$, implying that $J$ is not an independent set in $G_R'-F_R$.  
    
    For a contradiction, assume that $e \in F_R$ and let $C \subset G'$ be a non-degenerate cycle of length $\ell$ that accounts for the deletion of $e$. 

    Since $e \in \cO$, we have $C \setminus \{e\} \not\subset G_{\ast}$. So let $f' \in C \setminus \{ e\}$ with $f' \not\in G_{\ast}$. We now carefully observe that there exists $f \in \cO \cap G_R' $ such that $\pi_R(f')=\pi_R(f)$, by our definition of $\cO$. 
    Indeed if $\pi_R(f') \not\subset \pi_R(J)$ then $f' \in G_{\ast}$, a contradiction. Also if 
    \[ \pi_R(f') = \pi_R(g) \hspace{0.3em} \text{ for some  } g \in \cC_{\ast} \qquad \Longrightarrow \qquad f' \in G_{\ast}, \]
    which is also a contradiction. Also, note that $f'$ must be red, otherwise $f' \in G_{\ast}$. Thus, taking these together, implies $f \in \cO \cap G_R'$. 
    
    Now notice that $e \not= f$, since $C$ is non-degenerate.
    Thus,  by minimality of $e \in \cO \cap G_R'$, we know that $e$ precedes $f$ in the red ordering. Then $e$ must also precede $f'$, by the definition of the ordering. 
    But then $e$ is not the largest red edge in $C$, in the red ordering, and so it would not have been deleted. This contradicts that $C$ accounts for the deletion of $e$. Thus $e \notin F_R$ which completes the proof.  
 
\end{proof}

We now can finish the proof of Lemma~\ref{lem:ind-sets}. % by
%picking up from \eqref{eq:ind-set-calc1}. 
We note importantly, that since $\cA$ holds we can apply Lemma~\ref{lem:closed-edges} to learn that 
\[ |\cO| \geq | J^{(2)}\setminus \cC | \geq |J|^2/4 \geq (\delta k)^2/4 .\]
Thus we may condition
\[ 
\PP\big( J \in \cI(G_R'-F_R)  \wedge \cA \big) \leq \max_{G_t,\cO}\, \PP\big(  J \in \cI(G_R'-F_R)  \, \big|\,  G_{\ast},\cO \big) , 
\]
where the maximum is over all $(G_{\ast},\cO)$ which satisfy $|\cO|\geq (\delta k)^2/4$. This holds on the event $\cA$.
We now use Claim \ref{cl:ind-set-imp} to see the above is 
\[ 
\max_{G_{\ast},\cO}\, \PP\big( \cO \cap G'_R = \emptyset \, \big|\, G_{\ast},\cO  \big) \leq \max_{|\cO| \geq (\delta k)^2/4}(1-p)^{|\cO|} \leq (1-p)^{\delta^2 k^2/4}, 
\]
where the first inequality holds by independence of the edges $\cO \cap G'_R$.
\end{proof}

We may now prove our main technical theorem, Theorem~\ref{thm:main2}.

\begin{proof}[Proof of Theorem~\ref{thm:main2}]
Given our preparations, the proof of our main technical theorem now follows quickly. The graph $G=G'-D-F$ has $(1-o(1))n$ vertices with probability $1-o(1)$, by Lemma~\ref{lem:vx-del-2}. It has no cycles of length $\ell$, by construction. Thus we only need to check the independence number. 

For this, first note that every independent set in $G$ is an independent set in $G'-F$. Using Lemma~\ref{lem:ind-sets}, we see the expected number of independent sets of size $k$ in $G$ is thus at most 
\[ \binom{n}{k}(1-p)^{\delta^2 k^2/4 } \leq \exp\big( k \big( \log n  -  p \delta^2 k/4 ) \big).  \]
So if $p\delta^2 k > 8 \log n$, there is no independent set of size $k$, with probability $1-o(1)$.
\end{proof}

\section*{Acknowledgments}
The authors thank the anonymous referee for a very careful reading of an earlier draft of this paper. Their comments led to significant improvements to the presentation.

\bibliography{bib.bib}

\begin{thebibliography}{10}

\bibitem{bohman2010early}
T.~Bohman and P.~Keevash.
\newblock The early evolution of the {$H$}-free process.
\newblock {\em Inventiones mathematicae}, 181(2):291--336, 2010.

\bibitem{campos2025}
M.~Campos, M.~Jenssen, M.~Michelen, and J.~Sahasrabudhe.
\newblock A new lower bound for the {R}amsey numbers $ {R} (3, k) $.
\newblock {\em arXiv preprint arXiv:2505.13371}, 2025.

\bibitem{caro2000asymptotic}
Y.~Caro, Y.~Li, C.~C. Rousseau, and Y.~Zhang.
\newblock Asymptotic bounds for some bipartite graph: complete graph {R}amsey
  numbers.
\newblock {\em Discrete Math.}, 220(1-3):51--56, 2000.

\bibitem{conlon2024ramsey}
D.~Conlon, S.~Mattheus, D.~Mubayi, and J.~Verstra{\"e}te.
\newblock {R}amsey numbers and the {Z}arankiewicz problem.
\newblock {\em Bulletin of the London Mathematical Society}, 56(6):2014--2023,
  2024.

\bibitem{erdos1959graph}
P.~Erd{\H o}s.
\newblock Graph theory and probability.
\newblock {\em Canadian Journal of Mathematics}, 11:34--38, 1959.

\bibitem{hefty2025}
Z.~Hefty, P.~Horn, D.~King, and F.~Pfender.
\newblock Improving $ {R} (3, k) $ in just two bites.
\newblock {\em arXiv preprint arXiv:2510.19718}, 2025.

\bibitem{keevash2021cycle}
P.~Keevash, E.~Long, and J.~Skokan.
\newblock Cycle-complete {R}amsey numbers.
\newblock {\em Int. Math. Res. Not. IMRN}, pages 277--302, 2021.

\bibitem{li2003independence}
Y.~Li and W.~Zang.
\newblock The independence number of graphs with a forbidden cycle and {R}amsey
  numbers.
\newblock {\em J. Comb. Optim.}, 7(4):353--359, 2003.

\bibitem{mattheus2024asymptotics}
S.~Mattheus and J.~Verstra{\"e}te.
\newblock The asymptotics of $r(4, t)$.
\newblock {\em Annals of Mathematics}, 199(2):919--941, 2024.

\bibitem{mubayi2024}
D.~Mubayi and J.~Verstra{\"e}te.
\newblock A note on pseudorandom {R}amsey graphs.
\newblock {\em Journal of the European Mathematical Society (EMS Publishing)},
  26(1), 2024.

\bibitem{shearer1983note}
J.~B. Shearer.
\newblock A note on the independence number of triangle-free graphs.
\newblock {\em Discrete Mathematics}, 46(1):83--87, 1983.

\bibitem{spencer1977asymptotic}
J.~Spencer.
\newblock Asymptotic lower bounds for {R}amsey functions.
\newblock {\em Discrete Mathematics}, 20(1):69--76, 1977/78.

\bibitem{sudakov2002note}
B.~Sudakov.
\newblock A note on odd cycle-complete graph {R}amsey numbers.
\newblock {\em Electron. J. Combin.}, 9(1):Note 1, 4, 2002.

\bibitem{vu2005}
V.~H. Vu.
\newblock Spectral norm of random matrices.
\newblock In {\em Proceedings of the thirty-seventh annual ACM symposium on
  Theory of computing}, pages 423--430, 2005.

\end{thebibliography}
\bibliographystyle{abbrv}

\end{document}